\providecommand{\U}[1]{\protect\rule{.1in}{.1in}}
\providecommand{\U}[1]{\protect\rule{.1in}{.1in}}
\newtheorem{theorem}{Theorem}[section]
\newtheorem{proposition}[theorem]{Proposition}
\newtheorem{example}[theorem]{Example}
\newtheorem{lemma}[theorem]{Lemma}
\newtheorem{definition}[theorem]{Definition}
\numberwithin{equation}{section}
\begin{document}
\title[Linear structure in subsets of quasi-Banach sequence spaces ]{Linear structure in certain subsets of quasi-Banach sequence spaces}
\author[Daniel Tomaz]{Daniel Tomaz}
\address{Departamento de Matem\'{a}tica \\
\indent
	Universidade Federal da Para\'{\i}ba \\
\indent
	58.051-900 - Jo\~{a}o Pessoa, Brazil.}
\email{danieltomazmatufpb@gmail.com}
\thanks{2010 Mathematics Subject Classification: 46A16; 46A45}
\thanks{The author is supported by Capes}
%\subjclass[2017]{46G25, 47H60}
\keywords{Absolutely summing operators, quasi-Banach spaces}

\begin{abstract}
For $0<p<1,$ we prove that there is a $\mathfrak{c}$-dimensional subspace of $\mathcal{L}\left(  \ell_{p},\ell
_{p}\right)  $ such that, except for the null vector, all of its vectors fail
to be absolutely $(r,s)$-summing regardless of the real numbers $r,s$, with
$1\leq s\leq r<\infty$. This extends a result proved by Maddox in 1987.
Moreover, the result is sharp in the sense that it is not valid for $p\geq1.$

\end{abstract}
\maketitle

%\mathbf{p} \mathbf{p}

\section{Introduction and Notation}

Absolutely summing operators are usually investigated in the setting of Banach
spaces. In 1987, Maddox showed that the definition can be reformulated for
$p$-normed spaces $E$ and $F$ when the topological dual of $E$, denoted by
$E^{\prime},$ is non-trivial; for instance, if $E=\ell_{p}$ with $0<p<1$. For
$1\leq s\leq r<\infty,$ a linear operator $T:E\rightarrow F$ is absolutely
$\left(  r,s\right)  $-summing if $\sum_{k}\left\Vert T\left(  x_{k}\right)
\right\Vert ^{r}<\infty$ whenever $\left(  x_{k}\right)  _{k=1}^{\infty}$ is a
sequence in $E$ such that $\sum_{k}\left\vert f\left(  x_{k}\right)
\right\vert ^{s}<\infty$ for each $f\in E^{\prime}.$ The space of absolutely
$(r,s)$-summing linear operators from $E$ to $F$ will denoted by
$\prod\nolimits_{(r,s)}\left(  E;F\right)  $ and space of bounded linear
operators from $E$ to $F$ will be represented by $\mathcal{L}\left(
E;F\right)  $.

When $0<p<1$ and $x=\left(  x_{k}\right)  _{k=1}^{\infty}\in\ell_{p}$, we
denote the natural $p$-norm of $x$ by
\[
\left\Vert x\right\Vert _{p}=\left(  \sum\limits_{k=1}^{\infty}\left\vert
x_{k}\right\vert ^{p}\right)  ^{\frac{1}{p}}.
\]
Let $E$ be a Banach or quasi-Banach space over $\mathbb{K}=\mathbb{R}$ or
$\mathbb{C}$. A subset $A$ of $E$ is $\mu$-$\text{lineable}$ if $A\cup\left\{
0\right\}  $ contains a $\mu$-$\text{dimensional}$ linear subspace of $E$ and
is called $\mu$-$\text{spaceable}$ if $A\cup\left\{  0\right\}  $ contains a
closed $\mu$-$\text{dimensional}$ linear subspace of $E$. These notions of
lineability and spaceability are due to V.Gurariy (\cite{Gurariy1}), see (\cite{book}). From now
on $\mathfrak{c}=card\left(  \mathbb{R}\right)  $.\newline The purpose of this
paper is to show that the set $\mathcal{L}\left(  \ell_{p},\ell_{p}\right)
\smallsetminus\bigcup\nolimits_{1\leq s\leq r<\infty}\prod\nolimits_{(r,s)}%
\left(  \ell_{p},\ell_{p}\right)  $ is $\mathfrak{c}$-lineable for any $0<p<1$, using a technique of
lineability and spaceability explored in other contexts. In (\cite[Theorem
4.1]{Kitson},) the authors showed that if $E,F$ are Banach spaces, under
certain conditions, the set $\mathcal{K}\left(  E,F\right)  \diagdown
\bigcup\nolimits_{1\leq p<\infty}\prod\nolimits_{p}\left(  E,F\right)  $ is
spaceable, where $\mathcal{K}\left(  E,F\right)  $ denote the ideal compact
linear operators from $E$ to $F$, substantially improving a result obtained in
(\cite[Theorem 2.1]{Pellegrino1}). What about the case of non-locally convex
spaces, like $\ell_{p}$ for $0<p<1$?

It is worth recalling that the structure of quasi-Banach or more generally,
metrizable complete topological vector spaces is quite different from the
structure of Banach spaces. The consequence is that the extension of
lineability/spaceability arguments from Banach to quasi-Banach spaces is not
straightforward in general. \newline In (\cite[page 1]{Maligranda}), the
author points out that many definitions and classical results in Banach spaces
can be translated in a natural way to definitions and results in quasi-Banach
ou $p$-Banach spaces. For example, the standard results depending on Baire
Category, like Open Mapping Theorem, are valid also in the context of
quasi-Banach spaces. However, quasi-normed spaces are not necessarily locally
convex, and the Hahn-Banach theorem and results depending on it are in general
false in this context. For example, the dual space of $L^{p}$ for $0<p<1$ is
$\left\{  0\right\}  $. See, ([\cite{Day}, Theorem 1]).

From now on all Banach and quasi-Banach spaces are considered over a fixed
scalar field $\mathbb{K}$ wich can be either $\mathbb{R}$ or $\mathbb{C}$.

\section{Preliminaries}

Let us split $\mathbb{%
%TCIMACRO{\U{2115} }%
%BeginExpansion
\mathbb{N}
%EndExpansion
}$ into countably many infinite pairwise disjoint subsets $\left(  \mathbb{%
%TCIMACRO{\U{2115} }%
%BeginExpansion
\mathbb{N}
%EndExpansion
}_{k}\right)  _{k=1}^{\infty}.$ For each integer $k\in\mathbb{%
%TCIMACRO{\U{2115} }%
%BeginExpansion
\mathbb{N}
%EndExpansion
}$, write $\mathbb{%
%TCIMACRO{\U{2115} }%
%BeginExpansion
\mathbb{N}
%EndExpansion
}_{k}=\left\{  n_{1}^{\left(  k\right)  }<n_{2}^{\left(  k\right)  }%
<\cdots\right\}  $ and define%
\[
\ell_{p}^{\left(  k\right)  }:=\left\{  x\in\ell_{p}:x_{j}=0\text{ if }%
j\notin\mathbb{%
%TCIMACRO{\U{2115} }%
%BeginExpansion
\mathbb{N}
%EndExpansion
}_{k}\right\}  .
\]
In addition, consider the sequence of linear operators $i^{\left(  k\right)
}:\ell_{p}\longrightarrow\ell_{p}^{\left(  k\right)  }$ given by%
\[
i^{\left(  k\right)  }\left(  x\right)  _{a_{i}^{\left(  k\right)  }}=\left\{
\begin{array}
[c]{c}%
u\left(  x\right)  _{i},\text{if }k=j\\
0,\text{ if }k\neq j
\end{array}
\right.  .
\]
Now, for each $k\in\mathbb{%
%TCIMACRO{\U{2115} }%
%BeginExpansion
\mathbb{N}
%EndExpansion
}$ , consider the sequence $\left(  u_{k}\right)  _{k=1}^{\infty}$ of
operators belonging $\mathcal{L}\left(  \ell_{p};\ell_{p}\right)  $ given by%
\[
u_{k}:\ell_{p}\overset{i^{\left(  k\right)  }}{\longrightarrow}\ell
_{p}^{\left(  k\right)  }\overset{j}{\longrightarrow}\ell_{p}%
\]
where $j:\ell_{p}^{\left(  k\right)  }{\longrightarrow}\ell_{p}$ this is the
inclusion operator.

\begin{lemma}
\label{l1}Let $p>0.$ The operator $T:\ell_{p}\longrightarrow\mathcal{L}\left(
\ell_{p};\ell_{p}\right)  $ given by%
\[
T\left(  \left(  a_{k}\right)  _{k=1}^{\infty}\right)  =\sum\limits_{k=1}%
^{\infty}a_{k}u_{k}%
\]
is well defined, linear and injective.
\end{lemma}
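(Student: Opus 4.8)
The plan is to exploit the disjoint-support structure of the operators $u_k$ so as to reduce the entire statement to one exact computation of the operator quasi-norm. First I would record the structural facts I intend to use. Since $i^{(k)}$ spreads a sequence $x=(x_i)_i\in\ell_p$ onto the block $\mathbb{N}_k$ by placing $x_i$ in the slot $n_i^{(k)}$, each $u_k=j\circ i^{(k)}$ is an isometric embedding of $\ell_p$ into itself whose range is contained in $\ell_p^{(k)}$; in particular $\|u_k(x)\|_p=\|x\|_p$ for all $x$, so $\|u_k\|=1$. Because the sets $\mathbb{N}_k$ are pairwise disjoint, for any finitely many indices the vectors $u_k(x)$ have pairwise disjoint supports in $\ell_p$. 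I would also recall that, for $0<p<1$, $\ell_p$ is $p$-normed, i.e. $\|v+w\|_p^p\le\|v\|_p^p+\|w\|_p^p$, and that this property passes to the operator space: setting $\|U\|=\sup_{\|x\|_p\le 1}\|Ux\|_p$ one checks $\|(U+V)x\|_p^p\le\|U\|^p\|x\|_p^p+\|V\|^p\|x\|_p^p$, whence $\|U+V\|^p\le\|U\|^p+\|V\|^p$. Thus $\mathcal{L}(\ell_p;\ell_p)$ is itself a $p$-Banach space, in particular complete.

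For well-definedness I would prove an exact norm identity. Fix $(a_k)\in\ell_p$ and $x\in\ell_p$. Disjointness of supports together with the isometry of each $u_k$ gives, for every $N$,
\[
\Big\|\sum_{k=1}^N a_k u_k(x)\Big\|_p^p=\sum_{k=1}^N|a_k|^p\,\|u_k(x)\|_p^p=\Big(\sum_{k=1}^N|a_k|^p\Big)\|x\|_p^p,
\]
so that $\big\|\sum_{k=1}^N a_k u_k\big\|^p=\sum_{k=1}^N|a_k|^p$. Applying the same computation to a tail yields $\big\|\sum_{k=M+1}^N a_k u_k\big\|^p=\sum_{k=M+1}^N|a_k|^p$, which tends to $0$ as $M,N\to\infty$ because $\sum_k|a_k|^p<\infty$. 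Hence the partial sums form a Cauchy sequence in the complete space $\mathcal{L}(\ell_p;\ell_p)$ and converge, so $T\big((a_k)\big)$ is well defined; passing to the limit gives the isometric identity $\big\|T\big((a_k)\big)\big\|^p=\sum_k|a_k|^p=\|(a_k)\|_p^p$.

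Linearity then follows at once from the defining formula and the linearity of limits of convergent series in a topological vector space. Injectivity is immediate from the norm identity: if $T\big((a_k)\big)=0$ then $\|(a_k)\|_p^p=\big\|T\big((a_k)\big)\big\|^p=0$, forcing $a_k=0$ for every $k$; alternatively, evaluating $\sum_k a_k u_k$ at the first basis vector $e_1$ and reading off the coordinate at position $n_1^{(k)}$ isolates $a_k$ directly. The only genuinely delicate point is that we operate outside the locally convex world, so the ordinary triangle inequality is unavailable and every estimate must be carried out with the $p$-th powers of the quasi-norms; in particular one must first confirm that the operator quasi-norm on $\mathcal{L}(\ell_p;\ell_p)$ is $p$-subadditive and complete. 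Once the disjoint-support isometry identity is in place, well-definedness, linearity, and injectivity all drop out simultaneously.
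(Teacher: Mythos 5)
Your proof is correct, and its skeleton is the same as the paper's: reduce everything to a quasi-norm computation and invoke completeness of $\mathcal{L}(\ell_p;\ell_p)$ as a $p$-Banach space. The difference lies in what gets computed. The paper only establishes absolute $p$-summability, $\sum_{k}\|a_k u_k\|^p=\|u\|^p\sum_k|a_k|^p<\infty$, using merely that all the $u_k$ have the same norm, and then relies on the standard fact that absolutely $p$-summable series converge in a $p$-Banach space; linearity and injectivity are dismissed as ``clear.'' You instead exploit the pairwise disjointness of the blocks $\mathbb{N}_k$ --- which the paper reserves for the stability argument in the main theorem, not for this lemma --- to obtain the exact identity $\bigl\|\sum_{k=M+1}^{N}a_k u_k\bigr\|^p=\sum_{k=M+1}^{N}|a_k|^p$, hence Cauchyness of the partial sums and, in the limit, the isometric formula $\|T((a_k))\|^p=\|(a_k)\|_p^p$. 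This refinement costs you the preliminary verification that the operator quasi-norm is $p$-subadditive and that $\mathcal{L}(\ell_p;\ell_p)$ is complete (facts the paper assumes implicitly, citing Kalton only in the following lemma), but it buys a rigorous proof of injectivity, which the paper merely asserts; your coordinate argument at $e_1$ gives a second, computation-free route to the same conclusion. One small caveat: the lemma is stated for all $p>0$, and your discussion is phrased for the $p$-normed case $0<p<1$; for $p\ge 1$ the identical disjoint-support computation works with the ordinary triangle inequality, so nothing is lost, but this case deserves the one-line remark.
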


\begin{proof}
For $\left(  a_{k}\right)  _{k=1}^{\infty}\in\ell_{p}$ we have%
\begin{align*}
\sum\limits_{k=1}^{\infty}\left\Vert a_{k}u_{k}\right\Vert _{\mathcal{L}%
\left(  \ell_{p};\ell_{p}\right)  }^{p}  &  =\sum\limits_{k=1}^{\infty
}\left\vert a_{k}\right\vert ^{p}.\left\Vert u_{k}\right\Vert _{\mathcal{L}%
\left(  \ell_{p};\ell_{p}\right)  }^{p}\\
&  =\sum\limits_{k=1}^{\infty}\left\vert a_{k}\right\vert ^{p}.\left\Vert
i^{\left(  k\right)  }\right\Vert _{\mathcal{L}\left(  \ell_{p};\ell
_{p}^{\left(  k\right)  }\right)  }^{p}\\
&  =\sum\limits_{k=1}^{\infty}\left\vert a_{k}\right\vert ^{p}.\left\Vert
u\right\Vert _{\mathcal{L}\left(  \ell_{p};\ell_{p}\right)  }^{p}\\
&  =\left\Vert u\right\Vert _{\mathcal{L}\left(  \ell_{p};\ell_{p}\right)
}^{p}.\sum\limits_{k=1}^{\infty}\left\vert a_{k}\right\vert ^{p}<\infty.
\end{align*}
In this way, the map%
\[
T:\ell_{p}\longrightarrow\mathcal{L}\left(  \ell_{p};\ell_{p}\right)  ,\text{
}T\left(  \left(  a_{k}\right)  _{k=1}^{\infty}\right)  =\sum\limits_{k=1}%
^{\infty}a_{k}u_{k}%
\]
is well-defined. It is clear that $T$ is linear and injective.
\end{proof}

\begin{lemma}
Let $p>0.$ Then $\dim\mathcal{L}\left(  \ell_{p};\ell_{p}\right)
=\mathfrak{c.}$
\end{lemma}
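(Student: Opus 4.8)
The plan is to prove $\dim\mathcal{L}(\ell_p;\ell_p)=\mathfrak{c}$ by establishing two inequalities. The upper bound $\dim\mathcal{L}(\ell_p;\ell_p)\leq\mathfrak{c}$ is the routine direction: since $\ell_p$ is a separable metrizable space, it has a countable dense subset, and a bounded (hence continuous) linear operator is determined by its values on a dense set. Thus $\mathcal{L}(\ell_p;\ell_p)$ injects into the space of functions from a countable dense set into $\ell_p$, and since $\operatorname{card}(\ell_p)=\mathfrak{c}$, the cardinality of $\mathcal{L}(\ell_p;\ell_p)$ is at most $\mathfrak{c}^{\aleph_0}=\mathfrak{c}$. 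Consequently $\dim\mathcal{L}(\ell_p;\ell_p)\leq\operatorname{card}\mathcal{L}(\ell_p;\ell_p)\leq\mathfrak{c}$.

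For the lower bound, I would invoke \lemref{l1} directly. That lemma produces a linear injective map $T:\ell_p\to\mathcal{L}(\ell_p;\ell_p)$, so $\mathcal{L}(\ell_p;\ell_p)$ contains an isomorphic copy of the vector space $\ell_p$, whence $\dim\mathcal{L}(\ell_p;\ell_p)\geq\dim\ell_p$. It therefore suffices to show $\dim\ell_p=\mathfrak{c}$. Since $\ell_p$ is an infinite-dimensional separable complete metrizable topological vector space, it is a Baire space and cannot have a countably infinite Hamel basis; indeed, were $\{e_n\}_{n=1}^\infty$ a countable Hamel basis, then $\ell_p=\bigcup_{n}\operatorname{span}\{e_1,\dots,e_n\}$ would be a countable union of finite-dimensional (hence closed, nowhere dense) subspaces, contradicting the Baire category theorem. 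Thus the Hamel dimension of $\ell_p$ is uncountable, and combined with $\operatorname{card}(\ell_p)=\mathfrak{c}$ this forces $\dim\ell_p=\mathfrak{c}$.

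Putting the two bounds together gives $\dim\mathcal{L}(\ell_p;\ell_p)=\mathfrak{c}$. I expect the cardinality bookkeeping to be the only place requiring care: one must be precise that the Hamel dimension $\mu$ of any vector space over $\mathbb{K}$ satisfies $\operatorname{card}(\text{space})=\max(\mu,\operatorname{card}\mathbb{K})=\mathfrak{c}$ here, so that an uncountable dimension bounded above by $\mathfrak{c}$ is exactly $\mathfrak{c}$. The genuinely substantive input is the Baire-category argument ruling out a countable basis; everything else is cardinal arithmetic and the already-established injectivity of $T$.
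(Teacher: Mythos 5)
Your upper bound is fine and is essentially the paper's own argument: a continuous operator is determined by its values on a countable dense subset of $\ell_p$, so $\mathcal{L}(\ell_p;\ell_p)$ injects into a set of cardinality $\mathfrak{c}^{\aleph_0}=\mathfrak{c}$, and Hamel dimension is at most cardinality. The lower bound, however, has a genuine gap. Your Baire category argument only shows that $\ell_p$ has no countable Hamel basis, i.e. $\dim\ell_p\geq\aleph_1$; and your cardinal arithmetic $\operatorname{card}(\ell_p)=\max(\dim\ell_p,\operatorname{card}\mathbb{K})=\mathfrak{c}$ only yields $\dim\ell_p\leq\mathfrak{c}$, since the maximum equals $\mathfrak{c}$ regardless of whether $\dim\ell_p$ is $\aleph_1$ or $\mathfrak{c}$. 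The inference ``uncountable and $\leq\mathfrak{c}$ implies $=\mathfrak{c}$'' is precisely the Continuum Hypothesis, not a theorem of ZFC: the abstract vector space $\bigoplus_{\alpha<\omega_1}\mathbb{R}$ has cardinality $\mathfrak{c}$ and uncountable dimension $\aleph_1$, which is strictly less than $\mathfrak{c}$ in models where CH fails. So, as written, your argument does not pin down $\dim\ell_p$.

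The gap is easy to close while keeping your structure (reducing to $\dim\ell_p$ via the injectivity of $T$ from \lemref{l1}, which is a perfectly good move). Instead of Baire category, exhibit $\mathfrak{c}$ linearly independent vectors in $\ell_p$ directly: for each $t\in(0,1)$ let $x_t=(t^n)_{n=1}^{\infty}$, which lies in $\ell_p$ because $\sum_n t^{np}<\infty$; a finite relation $\sum_{i=1}^{k}c_i x_{t_i}=0$ with distinct $t_1,\dots,t_k$ forces $c_1=\cdots=c_k=0$, since reading off the first $k$ coordinates gives a system whose matrix $\left(t_i^{\,n}\right)_{n,i=1}^{k}$ has nonzero (generalized Vandermonde) determinant. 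Hence $\dim\ell_p\geq\mathfrak{c}$, and combined with your upper bound the lemma follows. For comparison, the paper takes a different route for the lower bound: it quotes the stronger theorem that \emph{every} infinite-dimensional quasi-Banach space has dimension at least $\mathfrak{c}$, applied to $\mathcal{L}(\ell_p;\ell_p)$ itself (which is quasi-Banach by Kalton's result). That theorem is true, but its proof requires more than the naive ``countable union of finite-dimensional subspaces'' Baire argument you gave, which by itself only rules out countable dimension.
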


\begin{proof}
For $0<p<1$, $\ell_{p}$ is quasi-Banach space ($p$-Banach-space). So,
$\mathcal{L}\left(  \ell_{p};\ell_{p}\right)  $ is quasi-Banach space (see
(\cite[page 322]{Kalton}). Using the Baire Category Theorem it is not
difficult to prove that every infinite-dimensional quasi-Banach space has
dimension not smaller than $\mathfrak{c}$.$\ $So $\dim\mathcal{L}\left(
\ell_{p};\ell_{p}\right)  \geq\mathfrak{c.}$ On the other hand, let $\gamma$
be a Hamel basis of $\mathcal{L}\left(  \ell_{p};\ell_{p}\right)  $ and
\begin{align*}
f  &  :\gamma\longrightarrow\ell_{p}^{c_{00}\left(
%TCIMACRO{\U{211a} }%
%BeginExpansion
\mathbb{Q}
%EndExpansion
\right)  }\\
T  &  \longmapsto T_{\left\vert c_{00}\left(
%TCIMACRO{\U{211a} }%
%BeginExpansion
\mathbb{Q}
%EndExpansion
\right)  \right.  },
\end{align*}
where $\ell_{p}^{c_{00}\left(
%TCIMACRO{\U{211a} }%
%BeginExpansion
\mathbb{Q}
%EndExpansion
\right)  }$ is the set of all functions from $c_{00}\left(
%TCIMACRO{\U{211a} }%
%BeginExpansion
\mathbb{Q}
%EndExpansion
\right)  $ to $\ell_{p}.$ By $c_{00}\left(
%TCIMACRO{\U{211a} }%
%BeginExpansion
\mathbb{Q}
%EndExpansion
\right)  $ we mean the eventually null sequences with rational entries. From
the density of $c_{00}\left(
%TCIMACRO{\U{211a} }%
%BeginExpansion
\mathbb{Q}
%EndExpansion
\right)  $ to $\ell_{p}$ we conclude that $f$ is injective and so
\[
\dim\mathcal{L}\left(  \ell_{p};\ell_{p}\right)  =card\left(  \gamma\right)
\leq card\left(  \ell_{p}^{c_{00}\left(
%TCIMACRO{\U{211a} }%
%BeginExpansion
\mathbb{Q}
%EndExpansion
\right)  }\right)  =\mathfrak{c}^{\aleph_{0}}=\mathfrak{c.}%
\]
Therefore, $\dim\mathcal{L}\left(  \ell_{p};\ell_{p}\right)  =\mathfrak{c.}$
The case $p\geq1$ is similar.
\end{proof}

\begin{definition}
Let $p\in(0,\infty).$ A proper subset $\mathcal{D\subsetneq L}\left(  \ell
_{p};\ell_{p}\right)  $ is stable if there is $u\notin\mathcal{D}$ such that
$u_{k}\notin\mathcal{D}$ for all $k$ and the linear operator $T:\ell
_{p}\longrightarrow\mathcal{L}\left(  \ell_{p};\ell_{p}\right)  $ given by%
\[
T\left(  \left(  a_{k}\right)  _{k=1}^{\infty}\right)  =\sum\limits_{k=1}%
^{\infty}a_{k}u_{k}%
\]
is such that $T\left(  \left(  a_{j}\right)  _{j=1}^{\infty}\right)
\subset\mathcal{L}\left(  \ell_{p};\ell_{p}\right)  \setminus\mathcal{D}$, for
all $0\neq\left(  a_{j}\right)  _{j=1}^{\infty}$ in $\ell_{p}$.
\end{definition}

\begin{proposition}
\label{p1} Let $p>0$ and $\mathcal{D\subsetneq L}\left(  \ell_{p};\ell
_{p}\right)  $ be stable. Then $\mathcal{L}\left(  \ell_{p};\ell_{p}\right)
\diagdown\mathcal{D}$ is $\mathfrak{c}$-lineable.
\end{proposition}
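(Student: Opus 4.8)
The plan is to leverage the map $T$ from Lemma~\ref{l1} together with the definition of stability. Since $\mathcal{D}$ is stable, there exists a distinguished $u \notin \mathcal{D}$ with $u_k \notin \mathcal{D}$ for all $k$, and the associated operator $T:\ell_p \longrightarrow \mathcal{L}(\ell_p;\ell_p)$, $T((a_k)_k) = \sum_{k=1}^\infty a_k u_k$, is well defined, linear and injective (this is precisely the content of Lemma~\ref{l1}, with the $u_k$ built from this particular $u$). The image $T(\ell_p)$ is therefore a linear subspace of $\mathcal{L}(\ell_p;\ell_p)$.

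First I would record that $T(\ell_p)$ is a genuine linear subspace of $\mathcal{L}(\ell_p;\ell_p)$, being the image of a linear map. Next I would compute its dimension: since $T$ is injective and linear, it carries a Hamel basis of $\ell_p$ to a linearly independent set in $\mathcal{L}(\ell_p;\ell_p)$, so $\dim T(\ell_p) = \dim \ell_p = \mathfrak{c}$ (the equality $\dim \ell_p = \mathfrak{c}$ follows from the same Baire-category argument used in the second lemma, since $\ell_p$ is an infinite-dimensional quasi-Banach space and $\operatorname{card}(\ell_p) = \mathfrak{c}$).

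The crucial step is to verify that $T(\ell_p) \setminus \{0\}$ lands entirely in the complement $\mathcal{L}(\ell_p;\ell_p) \setminus \mathcal{D}$. But this is exactly the defining property of stability: for every $0 \neq (a_j)_{j=1}^\infty$ in $\ell_p$, one has $T((a_j)_j) \in \mathcal{L}(\ell_p;\ell_p) \setminus \mathcal{D}$. Thus every nonzero vector of the subspace $T(\ell_p)$ avoids $\mathcal{D}$, which says precisely that the set $\bigl(\mathcal{L}(\ell_p;\ell_p) \setminus \mathcal{D}\bigr) \cup \{0\}$ contains the $\mathfrak{c}$-dimensional subspace $T(\ell_p)$.

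Combining these observations, $T(\ell_p)$ is a $\mathfrak{c}$-dimensional linear subspace of $\mathcal{L}(\ell_p;\ell_p)$ all of whose nonzero elements lie in $\mathcal{L}(\ell_p;\ell_p) \setminus \mathcal{D}$, which is the definition of $\mathfrak{c}$-lineability of that complement. I expect no serious obstacle here, since the proposition is essentially a repackaging of the stability hypothesis; the only point requiring a word of care is confirming $\dim T(\ell_p) = \mathfrak{c}$, and that is immediate from injectivity once $\dim \ell_p = \mathfrak{c}$ is in hand. The real work has been deferred to showing that the specific set of interest is stable, which is where the absolutely summing structure will enter.
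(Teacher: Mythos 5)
Your proof is correct and takes essentially the same route as the paper: both use the injective linear map $T$ from Lemma~\ref{l1} (built from the stability witness $u$), note that $T(\ell_p)$ is a $\mathfrak{c}$-dimensional subspace, and invoke stability to place $T(\ell_p)\setminus\{0\}$ inside $\mathcal{L}(\ell_p;\ell_p)\setminus\mathcal{D}$. The only difference is that you spell out the dimension count ($\dim\ell_p=\mathfrak{c}$ via Baire category plus injectivity of $T$), which the paper asserts without detail.
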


\begin{proof}
Since $T$ is linear and injective, $T\left(  \ell_{p}\right)  $ is a
$\mathfrak{c}$-dimensional subspace of $\mathcal{L}\left(  \ell_{p};\ell
_{p}\right)  .$ Since $\mathcal{D}$ is stable we have $T\left(  \ell
_{p}\right)  \diagdown\left\{  0\right\}  \subset\mathcal{L}\left(  \ell
_{p};\ell_{p}\right)  \diagdown\mathcal{D}$. Therefore, $\mathcal{L}\left(
\ell_{p};\ell_{p}\right)  \diagdown\mathcal{D}$ is $\mathfrak{c}$-lineable.
\end{proof}

The next theorem is fundamental to the proof of our main result. It has been
proven by Maddox in (\cite{Maddox}), from results obtained by Macphail in
(\cite{Macphail}).

\begin{theorem}
\label{t1} (\cite[Theorem 4]{Maddox}) Let $0<p<1$ and $1\leq s\leq r<\infty$.
Then the identity map $i:\ell_{p}\longrightarrow\ell_{p}$ is non $\left(
r,s\right)  $-absolutely summing.
\end{theorem}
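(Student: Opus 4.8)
The plan is to show that the identity map $i:\ell_p\to\ell_p$ fails to be absolutely $(r,s)$-summing by exhibiting a sequence $(x_k)$ in $\ell_p$ that is \emph{weakly} $s$-summable but whose image under $i$ is not strongly $r$-summable. First I would recall what the relevant summability conditions mean concretely in the $p$-normed setting. A sequence $(x_k)$ is ``weakly $s$-summable'' when $\sum_k|f(x_k)|^s<\infty$ for every $f\in(\ell_p)'$; the key point, which Maddox exploits, is that for $0<p<1$ the dual $(\ell_p)'$ is isometrically $\ell_\infty$ (each continuous functional is given by a bounded sequence acting coordinatewise), so the weak $s$-summability condition is governed by the behaviour of the coordinates of the $x_k$. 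Meanwhile the ``strong $r$-summability'' of $(i(x_k))=(x_k)$ is the condition $\sum_k\|x_k\|_p^r<\infty$. The goal is therefore to construct vectors whose $\ell_p$-norms are large (so $\sum_k\|x_k\|_p^r$ diverges) while their action against every bounded functional stays $s$-summable.

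The natural construction is to use blocks of the canonical basis. I would take $x_k$ to be a suitably normalized sum of $N_k$ successive standard basis vectors $e_j$ supported on disjoint blocks, say $x_k=\lambda_k\sum_{j\in B_k}e_j$ with $|B_k|=N_k$ and the blocks $B_k$ pairwise disjoint. For such a vector, $\|x_k\|_p=\lambda_k N_k^{1/p}$, whereas for any $f=(f_j)\in\ell_\infty$ one has $|f(x_k)|=\lambda_k\bigl|\sum_{j\in B_k}f_j\bigr|\le\lambda_k N_k\|f\|_\infty$. The contrast between the exponent $1/p>1$ appearing in the norm and the exponent $1$ appearing in the functional estimate is exactly the gap that makes the construction possible. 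Choosing $\lambda_k$ and $N_k$ appropriately, I would arrange $\sum_k|f(x_k)|^s<\infty$ uniformly over the unit ball of $\ell_\infty$ (this is where one must be careful, since the functional bound must hold for \emph{all} $f$, not just coordinate functionals) while forcing $\sum_k\|x_k\|_p^r=\sum_k\lambda_k^r N_k^{r/p}=\infty$. Because $1\le s\le r$ and $1/p>1$, one can balance $\lambda_k\sim N_k^{-\alpha}$ for a suitable exponent $\alpha$ so that $\lambda_k N_k=N_k^{1-\alpha}$ decays fast enough for $s$-summability while $\lambda_k N_k^{1/p}=N_k^{1/p-\alpha}$ fails to decay fast enough for $r$-summability.

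The main obstacle I anticipate is precisely the uniformity of the weak-summability estimate: verifying $\sum_k|f(x_k)|^s<\infty$ for a \emph{fixed} $f$ is routine, but the definition of absolutely summing requires control over the supremum, i.e. one must ensure the sequence genuinely lies in the weak-$\ell_s$ space $\ell_s^w(\ell_p)$, equivalently that $\sup_{\|f\|\le1}\sum_k|f(x_k)|^s<\infty$. Since the blocks $B_k$ are disjoint, the worst-case $f$ is the one that aligns its signs/phases with the support of each $x_k$, and disjointness keeps these contributions from interfering; this is the structural reason Macphail's and Maddox's argument goes through, and it is the step I would write out most carefully. Once the uniform weak bound and the divergence of $\sum_k\|x_k\|_p^r$ are both established, the two together say exactly that $(x_k)$ witnesses the failure of $i$ to be $(r,s)$-summing, completing the proof.
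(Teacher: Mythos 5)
The paper does not actually prove this statement: Theorem \ref{t1} is imported verbatim from Maddox \cite{Maddox}, whose argument in turn rests on Macphail's construction \cite{Macphail}, so there is no in-paper proof to compare against; your proposal is therefore judged on its own, and it is correct. It can be closed with a completely explicit parameter choice: take $N_k=2^k$, fix any $\alpha$ with $1<\alpha<1/p$ (such an $\alpha$ exists precisely because $p<1$), and set $\lambda_k=N_k^{-\alpha}$. For any continuous linear functional $f$ on $\ell_p$ one has $|f(e_j)|\le\|f\|\,\|e_j\|_p=\|f\|$, and since each $x_k$ is finitely supported, $f(x_k)=\lambda_k\sum_{j\in B_k}f(e_j)$ by linearity alone — so you do not even need the full isometric identification $(\ell_p)'\cong\ell_\infty$, only the trivial bound on basis vectors. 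Hence $\sum_k|f(x_k)|^s\le\|f\|^s\sum_k 2^{ks(1-\alpha)}<\infty$ for every $s\ge1$, while $\|x_k\|_p^r=2^{kr(1/p-\alpha)}\to\infty$, so $\sum_k\|x_k\|_p^r=\infty$ for every $r<\infty$; a single sequence thus witnesses the failure for all pairs $1\le s\le r<\infty$ simultaneously. Two of the worries you flag are in fact non-issues: the paper's definition of $(r,s)$-summing only requires $\sum_k|f(x_k)|^s<\infty$ for each fixed $f$ (not a uniform bound), and in any case your estimate $|f(x_k)|\le\lambda_kN_k\|f\|_\infty$ is already uniform over the unit ball; likewise the disjointness of the blocks $B_k$, while natural, is not needed for either estimate. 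Compared with the citation route through Maddox and Macphail, your argument has the virtue of making the mechanism transparent — the gap between the $p$-norm $\|\sum_{j\in B}e_j\|_p=N^{1/p}$ and the functional bound $N$ — which also explains why the result must fail at $p=1$, consistent with the sharpness discussion in Theorem \ref{mt}.
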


\section{The main result}

\begin{theorem}
\label{mt} Let $0<p<1$ and $1\leq s\leq r<\infty$. Then $\mathcal{L}\left(
\ell_{p},\ell_{p}\right)  \smallsetminus\bigcup\nolimits_{1\leq s\leq
r<\infty}\prod\nolimits_{(r,s)}\left(  \ell_{p},\ell_{p}\right)  $ is
$\mathfrak{c}$-lineable. Moreover, the result is sharp, since it is not valid
for $p\geq1.$
\end{theorem}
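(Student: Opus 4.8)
The plan is to apply Proposition~\ref{p1} with the distinguished subset
\[
\mathcal{D}=\bigcup\nolimits_{1\leq s\leq r<\infty}\prod\nolimits_{(r,s)}\left(\ell_{p},\ell_{p}\right),
\]
so the entire argument reduces to verifying that $\mathcal{D}$ is \emph{stable} in the sense of the definition above. First I would take $u$ to be the identity map $i:\ell_{p}\longrightarrow\ell_{p}$. By Theorem~\ref{t1}, $i$ fails to be $(r,s)$-summing for every admissible pair $(r,s)$, so $i\notin\mathcal{D}$. The operators $u_{k}$ built in the Preliminaries are, up to the coordinate relabelling induced by the bijection $\mathbb{N}\cong\mathbb{N}_{k}$, just copies of the identity restricted to the block $\ell_{p}^{(k)}$; since summability properties are preserved under such coordinate permutations and each $u_{k}$ acts as an isometric copy of $i$ on the block it sees, each $u_{k}$ also fails to be $(r,s)$-summing for every $(r,s)$, giving $u_{k}\notin\mathcal{D}$ for all $k$.

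The main work is to show that for every nonzero $(a_{k})_{k=1}^{\infty}\in\ell_{p}$ the operator $T((a_{k})_{k})=\sum_{k}a_{k}u_{k}$ fails to lie in $\mathcal{D}$, i.e.\ fails to be $(r,s)$-summing for every $1\leq s\leq r<\infty$. I would argue as follows. Fix a pair $(r,s)$ and fix an index $k_{0}$ with $a_{k_{0}}\neq0$. The operators $u_{k}$ have pairwise disjoint ranges (supported on the disjoint blocks $\mathbb{N}_{k}$), and moreover $u_{k}$ annihilates the coordinates outside $\mathbb{N}_{k}$ only after the relabelling; the key point is that $\sum_{k}a_{k}u_{k}$, restricted to inputs supported on the single block $\mathbb{N}_{k_{0}}$, coincides with $a_{k_{0}}$ times an isometric copy of the identity on $\ell_{p}$. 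Since being $(r,s)$-summing is inherited by restriction to a complemented subspace isometric to $\ell_{p}$, and since the identity on $\ell_{p}$ is not $(r,s)$-summing by Theorem~\ref{t1}, the nonzero scalar factor $a_{k_{0}}$ cannot rescue summability, and hence $T((a_{k})_{k})\notin\prod\nolimits_{(r,s)}(\ell_{p},\ell_{p})$. As $(r,s)$ was arbitrary, $T((a_{k})_{k})\notin\mathcal{D}$. This establishes stability, and Proposition~\ref{p1} then yields that $\mathcal{L}(\ell_{p},\ell_{p})\smallsetminus\mathcal{D}$ is $\mathfrak{c}$-lineable.

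The hard part, and the step I would treat most carefully, is the restriction argument: I must exhibit a concrete summability-witnessing sequence inside the block $\mathbb{N}_{k_{0}}$ on which $T((a_{k})_{k})$ behaves exactly like a scalar multiple of the identity. Concretely, I would take a sequence $(x_{n})$ in $\ell_{p}$ witnessing the failure of $(r,s)$-summability of $i$ (as furnished by Theorem~\ref{t1}), push it into the block via $i^{(k_{0})}$, and verify that it is weakly $s$-summable in the quasi-Banach sense (i.e.\ $\sum_{n}|f(x_{n})|^{s}<\infty$ for every $f\in\ell_{p}^{\prime}$) while $\sum_{n}\|T((a_{k})_{k})(x_{n})\|_{p}^{r}=|a_{k_{0}}|^{r}\sum_{n}\|i(x_{n})\|_{p}^{r}=\infty$. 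Here I must use that $E^{\prime}=\ell_{p}^{\prime}$ is nontrivial (which makes the definition of $(r,s)$-summing meaningful) and that the disjointness of the blocks guarantees $f\circ T((a_{k})_{k})$ factors through the single-block behaviour.

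Finally, for the sharpness claim I would note that when $p\geq 1$ the space $\ell_{p}$ is Banach and the identity on $\ell_{p}$ \emph{is} $(r,s)$-summing for suitable pairs (e.g.\ by the Grothendieck--Bennett--Carl theory for diagonal and inclusion maps between $\ell_{p}$ spaces), so Theorem~\ref{t1} fails and the construction above collapses; hence the complement set in question is no longer co-large, confirming that the hypothesis $0<p<1$ is essential.
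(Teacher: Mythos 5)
Your proof of the lineability assertion follows the paper's own route (stability of $\mathcal{D}=\bigcup_{1\leq s\leq r<\infty}\prod\nolimits_{(r,s)}(\ell_p,\ell_p)$ via Theorem~\ref{t1} and Proposition~\ref{p1}), but your key step misreads the construction of the $u_k$. These operators have pairwise disjoint \emph{ranges}, not disjoint domains: $u_k$ sends the whole input $x\in\ell_p$ to the relabelled copy of $x$ living in the block $\mathbb{N}_k$ (position $n_i^{(k)}$ receives $x_i$). Hence, for $y$ supported in $\mathbb{N}_{k_0}$, every $u_k(y)$ is nonzero, and the disjointness of the ranges gives
\[
\left\Vert T\left(\left(a_k\right)_{k=1}^{\infty}\right)(y)\right\Vert_p^p=\sum_{k=1}^{\infty}\left\vert a_k\right\vert^p\left\Vert y\right\Vert_p^p ,
\]
so the restriction of $T((a_k)_{k=1}^{\infty})$ to the block is $\Vert (a_k)_{k}\Vert_p$ times an isometry, \emph{not} $a_{k_0}$ times a copy of the identity, and your displayed equality with the constant $\vert a_{k_0}\vert^r$ is false. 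The error is harmless, though: since for \emph{every} input $x$ one has $\Vert T((a_k)_{k=1}^{\infty})(x)\Vert_p\geq \vert a_{k_0}\vert\,\Vert x\Vert_p$, you may feed the witnessing sequence $(x_n)$ of Theorem~\ref{t1} into $T((a_k)_{k=1}^{\infty})$ directly---no pushing into a block and no complemented-subspace argument is needed---and weak $s$-summability is untouched while $\sum_n\Vert T((a_k)_{k=1}^{\infty})(x_n)\Vert_p^r\geq\vert a_{k_0}\vert^r\sum_n\Vert x_n\Vert_p^r=\infty$. With that repair your stability argument, and hence the lineability half, is correct and coincides with the paper's.

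The genuine gap is in the sharpness claim. You argue that for $p\geq1$ the identity becomes $(r,s)$-summing for suitable pairs, so Theorem~\ref{t1} fails, ``the construction above collapses,'' and hence the result is no longer valid. This is a non sequitur: the collapse of one proof technique does not show that $\mathcal{L}(\ell_p,\ell_p)\setminus\mathcal{D}$ fails to be $\mathfrak{c}$-lineable---the statement could conceivably be provable by other means. What the paper proves is that this set is actually \emph{empty}: for $p\geq1$ the space $\ell_p$ has cotype $\max\{p,2\}$, so the identity $i$ is $(\max\{p,2\},1)$-summing, and then the ideal property of $\prod\nolimits_{(r,s)}$ shows that every $T=i\circ T\in\mathcal{L}(\ell_p;\ell_p)$ is $(\max\{p,2\},1)$-summing; that is, $\prod\nolimits_{(\max\{p,2\},1)}(\ell_p,\ell_p)=\mathcal{L}(\ell_p;\ell_p)$, whence $\mathcal{L}(\ell_p;\ell_p)\setminus\mathcal{D}=\emptyset$, which in particular is not $\mathfrak{c}$-lineable. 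You do state the key ingredient (the identity is summing for a suitable pair), but you must push it through the ideal property to conclude emptiness of the set; as written, the sharpness assertion is not established.
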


\begin{proof}
It is enough to prove that $\mathcal{D}:=\bigcup\nolimits_{1\leq s\leq
r<\infty}\prod\nolimits_{(r,s)}\left(  \ell_{p},\ell_{p}\right)  $ is stable.
Let $i:\ell_{p}\longrightarrow\ell_{p}$ the identity map. By Theorem \ref{t1},
$i$ is non $\left(  r,s\right)  $-absolutely summing for $1\leq s\leq
r<\infty$ , that is, $i\notin\mathcal{D}$. Now, let us split $\mathbb{%
%TCIMACRO{\U{2115} }%
%BeginExpansion
\mathbb{N}
%EndExpansion
}$ into countably many infinite pairwise disjoint subsets $\left(  \mathbb{%
%TCIMACRO{\U{2115} }%
%BeginExpansion
\mathbb{N}
%EndExpansion
}_{k}\right)  _{k=1}^{\infty}.$ For each integer $k\in\mathbb{%
%TCIMACRO{\U{2115} }%
%BeginExpansion
\mathbb{N}
%EndExpansion
}$, write $\mathbb{%
%TCIMACRO{\U{2115} }%
%BeginExpansion
\mathbb{N}
%EndExpansion
}_{k}=\left\{  n_{1}^{\left(  k\right)  }<n_{2}^{\left(  k\right)  }%
<\cdots\right\}  $ and define%
\[
\ell_{p}^{\left(  k\right)  }:=\left\{  x\in\ell_{p}:x_{j}=0\text{ if }%
j\notin\mathbb{%
%TCIMACRO{\U{2115} }%
%BeginExpansion
\mathbb{N}
%EndExpansion
}_{k}\right\}  .
\]
In addition, consider the sequence of linear operators $i^{\left(  k\right)
}:\ell_{p}\longrightarrow\ell_{p}^{\left(  k\right)  }$ given by%
\[
i^{\left(  k\right)  }\left(  x\right)  _{a_{i}^{\left(  k\right)  }}=\left\{
\begin{array}
[c]{c}%
i\left(  x\right)  _{i},\text{if }k=j\\
0,\text{if }k\neq j
\end{array}
\right.  .
\]
Now, for each $k\in\mathbb{%
%TCIMACRO{\U{2115} }%
%BeginExpansion
\mathbb{N}
%EndExpansion
}$ , consider the sequence $\left(  u_{k}\right)  _{k=1}^{\infty}$ of
operators belonging $\mathcal{L}\left(  \ell_{p};\ell_{p}\right)  $ given by%
\[
u_{k}:\ell_{p}\overset{i^{\left(  k\right)  }}{\longrightarrow}\ell
_{p}^{\left(  k\right)  }\overset{j}{\longrightarrow}\ell_{p}%
\]
where $j:\ell_{p}^{\left(  k\right)  }{\longrightarrow}\ell_{p}$ this is the
inclusion operator. Note that
\begin{center}
$\left\Vert u_{k}\right\Vert _{\mathcal{L}\left(  \ell_{p};\ell_{p}\right)
}=\left\Vert i^{\left(  k\right)  }\right\Vert _{\mathcal{L} \left(  \ell
_{p};\ell_{p}^{\left(  k\right)  }\right)  }=\left\Vert i\right\Vert
_{\mathcal{L}\left(  \ell_{p};\ell_{p}\right)  }$
\end{center}

for each positive integer $k$. Like this, $u_{k}\notin\mathcal{D}$. For
$0\neq\left(  a_{k}\right)  _{k=1}^{\infty}$ in $\ell_{p}$, by Lemma \ref{l1},
the operator $T:\ell_{p}\longrightarrow\mathcal{L}\left(  \ell_{p};\ell
_{p}\right)  $ given by%
\[
T\left(  \left(  a_{k}\right)  _{k=1}^{\infty}\right)  =\sum\limits_{k=1}%
^{\infty}a_{k}u_{k}%
\]
is well defined, linear and injective. Since the supports of the operators
$u_{k}$ are pairwise disjoints, we have $T\left(  \left(  a_{k}\right)
_{k=1}^{\infty}\right)  \subset\mathcal{L}\left(  \ell_{p};\ell_{p}\right)
\setminus\mathcal{D}$. Therefore, $\mathcal{D}$ is stable. The result follows
then from the Proposition \ref{p1}.

For $p\geq1$, it is well known that%
\[
\prod\nolimits_{(\max\{p,2\},1)}\left(  \ell_{p},\ell_{p}\right)
=\mathcal{L}\left(  \ell_{p};\ell_{p}\right)
\]
and thus%
\[
\mathcal{L}\left(  \ell_{p};\ell_{p}\right)  \setminus\mathcal{D}=\emptyset.
\]

\end{proof}

Our main result shows that, in the case of absolutely summing operators, we
have a different results when dealing with quasi-Banach and Banach spaces.
However, in other settings quasi-Banach and Banach spaces behave similarly, as
we can see in the following examples:

\begin{example}
\label{c1} For $p,q>0$, the set of injective operators in $\mathcal{L}\left(
\ell_{p};\ell_{q}\right)  $ is $\mathfrak{c}$-lineable. In fact, defining
$\mathcal{D}$ as the set of non-injective operators in $\mathcal{L}\left(
\ell_{p};\ell_{q}\right)  $, we can easily see that $\mathcal{D}$ is stable
and by Proposition \ref{p1} we conclude that $\mathcal{L}\left(  \ell_{p}%
;\ell_{q}\right)  \diagdown\mathcal{D}$ is $\mathfrak{c}$-lineable.
\end{example}

Using a variation of the notion of stability we can also easily obtain the
following examples:

\begin{example}
	For $p,q>0$ the set $\mathcal{D=}\left\{  T\in\mathcal{L}\left(  \ell_{p}%
	;\ell_{q}\right)  :\text{ }T\text{ is non surjective}\right\}  $ is spaceable.
	In fact, for each integer $k$, consider the sequence of operators $T_{k}%
	:\ell_{p}\longrightarrow\ell_{q}$ in $\mathcal{L}\left(  \ell_{p};\ell
	_{q}\right)  $ given by%
	\[
	T_{k}\left(  x\right)  =\left(  0,0,\ldots,x_{k},0,0,\ldots\right)
	\]
	with $x_{k}$ in $\left(  k+1\right)  $th position and $x$ in $\ell_{p}$. Note
	that $T_{k}$ is non surjective for all $k\in%
	%TCIMACRO{\U{2115} }%
	%BeginExpansion
	\mathbb{N}
	%EndExpansion
	.$ Moreover, the operator $\Psi:\ell_{1}\longrightarrow\mathcal{L}\left(
	\ell_{p};\ell_{q}\right)  $ given by $\Psi\left(  \left(  a_{k}\right)
	_{k=1}^{\infty}\right)  =\sum\limits_{k=1}^{\infty}a_{k}T_{k}$ is well defined
	because,%
	\begin{align*}
	\sum\limits_{k=1}^{\infty}\left\Vert a_{k}T_{k}\right\Vert  & =\sum
	\limits_{k=1}^{\infty}\left\vert a_{k}\right\vert .\left\Vert T_{k}\right\Vert
	\\
	& =\sum\limits_{k=1}^{\infty}\left\vert a_{k}\right\vert .1<\infty.
	\end{align*}
	It is clear that $\Psi$ is linear and injective. By construction of operators
	$T_{k}$, $\overline{\Psi\left(  \left(  a_{k}\right)  _{k=1}^{\infty}\right)
	}$ is non surjective. Therefore, $\overline{\Psi\left(  \left(  a_{k}\right)
	_{k=1}^{\infty}\right)  }\subset\mathcal{D}$. We conclude that $\mathcal{D}$
is spaceable.
\end{example}

\begin{example}
	For $p,q>0$ the set $\mathcal{D=}\left\{  T\in\mathcal{L}\left(  \ell_{p}%
	;\ell_{q}\right)  :\text{ }T\text{ is non injective}\right\}  $ is spaceable.
	In fact, the proof is analogous to that of the previous example, it is
	sufficient to consider the sequence of operators $R_{k}:\ell_{p}%
	\longrightarrow\ell_{q}$ in $\mathcal{L}\left(  \ell_{p};\ell_{q}\right)  $
	given by%
	\[
	R_{1}\left(  x\right)  =\left(  x_{2},0,0,\ldots\right)  \text{ and }%
	R_{k}\left(  x\right)  =\left(  0,0,0,\ldots x_{k+1},0,\ldots\right)  \text{
		for }k>1
	\]
	with $x_{k+1}$ in $k$th position. We can easily see that $R_{k}$ is non
	injective for all $k\in%
	%TCIMACRO{\U{2115} }%
	%BeginExpansion
	\mathbb{N}
	%EndExpansion
	$ and the result follows.
\end{example}
\section*{Acknowledgments}
This Paper is part of the PhD Thesis of the author under supervision of Professor Nacib Albuquerque. 

\bigskip

\end{document}